\newtheorem{proposition}{Proposition}
\newtheorem{lemma}{Lemma}
\newtheorem{theorem}{Theorem}
\theoremstyle{definition}
\newtheorem{corollary}{Corollary}
\newtheorem{remark}{Remark}
\newcommand{\BQDILOG}{\bar{\Phi}_{\la}}
\newcommand{\cla}{c_\la}
\newcommand{\COMPLEXS}{\mathbb{C}}
\DeclareMathOperator{\dil}{Li_2}
\newcommand{\FQDILOG }{\phi}
\newcommand{\IHG}[1]{{\Psi_#1}}
\newcommand{\Imun}{\mathsf{i}}
\newcommand{\INTEGERS}{\mathbb{Z}}
\newcommand{\la}{\mathsf{b}}
\newcommand{\MOM}{\mathsf p}
\newcommand{\POS}{\mathsf q}
\newcommand{\QDILOG}{\Phi_{\la}}
\newcommand{\REALS}{\mathbb{R}}
\begin{document}

\title{A new formulation of the Teichm\"uller TQFT}
\author{J{\o}rgen Ellegaard Andersen}
\address{Center for Quantum Geometry of Moduli Spaces\\
        University of Aarhus\\
        DK-8000, Denmark}
\email{andersen@qgm.au.dk}

\author{Rinat Kashaev}
\address{University of Geneva\\
2-4 rue du Li\`evre, Case postale 64\\
 1211 Gen\`eve 4, Switzerland}
\email{rinat.kashaev@unige.ch}

\thanks{Supported in part by the center of excellence grant ``Center for quantum geometry of Moduli Spaces" from the Danish National Research Foundation, and Swiss National Science Foundation}

\begin{abstract}
By using the Weil--Gel'fand--Zak transform of Faddeev's quantum dilogarithm, we propose a new state-integral model for the Teichm\"uller  TQFT, where the circle valued state variables live on the edges of oriented leveled shaped triangulations.
\end{abstract}

\date{May 18, 2013}
\maketitle

\section{Introduction}
Topological Quantum Field Theories in dimension $2 +1$ were discovered and 
axiomatized in ~\cite{MR953828, MR1001453, MR981378}, and first constructed  in~\cite{MR1091619,MR1191386} by combinatorial means from the finite dimensional representation category of the quantum group $U_q(sl(2))$ at roots of unity and later in~\cite{MR1362791} by purely topological means.

Quantum Teichm\"{u}ller theory~\cite{MR1607296, MR1737362} produces infinite dimensional representations of surface mapping class groups thus giving an example of a potential TQFT associated with Chern--Simons theory with a non-compact gauge group. The essential  ingredients behind quantum Teichm\"{u}ller theory are Penner's cell decomposition of decorated Teichm\"uller space, the associated Ptolemy groupoid \cite{MR919235} (see also the 
recent book~\cite{Penner2012} with many other applications), and Faddeev's quantum dilogarithm \cite{MR1345554}. Quantum Teichm\"{u}ller theory indeed  has been extended to a TQFT in~\cite{AndersenKashaev2011} along with  physical constructions  of~\cite{MR1865275,MR2330673,MR2551896,MR2795276,Dimofte2011}.

In this paper, we propose a new formulation of the Teichm\"uller TQFT of~\cite{AndersenKashaev2011}. 
We use the same combinatorial setting as in~\cite{AndersenKashaev2011}, but the new formulation has a number of distinguishing features:
\begin{itemize} 
\item The state-integral is of the Turaev--Viro type with the state variables living on the edges of a triangulation and taking their values on the real line as in the model suggested in~\cite{KashaevLuoVartanov2012}.
\item Unlike~\cite{KashaevLuoVartanov2012}, the integrand is periodic in each of the integration variables so that the state integral is performed only over the a compact domain given by a (multi-dimensional) cube. As a consequence, the convergence of the state integral is determined only by the integrability properties of the integrand. Similar integrations over compact sets were used recently in lattice integrable systems~\cite{MR3019403}.
\item The tetrahedral weights are quasi-periodic functions of state variables so that geometrically they determine sections of line bundles over tori.
\item As in the earlier models of~\cite{AndersenKashaev2011,KashaevLuoVartanov2012}, strict positivity of shapes is enough for absolute convergence, but as the tetrahedral weights of the new model admit analytic continuation to meromorphic sections of line bundles over complex tori, the partition functions can be analytically continued to arbitrary complex shapes so that the theory is well defined without imposing positivity conditions on shapes. As a consequence, the shaped 2-3 and 3-2 Pachner moves are valid without any restriction, and thus the theory becomes truly topologically invariant.
\item Similarly to the model of~\cite{KashaevLuoVartanov2012}, there  is no any restriction for the topology of the cobordisms.
\end{itemize}
The explicit form of our state integral follows.

A \emph{triangulation} $X$ is a $CW$-complex where all cells are simplices. We denote by $\Delta_i(X)$ the set of $i$-dimensional cells of $X$. We refer to~\cite{AndersenKashaev2011} for further notation and the detailed description of the combinatorial setting of leveled shaped triangulated oriented pseudo (LSTOP) 3-manifolds. Let $T\subset\REALS^3$ be a shaped  tetrahedron with vertex ordering mapping
\[
v\colon \{0,1,2,3\}\to \Delta_0(T).
\]
For any
\[
x\colon \Delta_1(T)\to\REALS,
\]
we define a Boltzmann weight
\[
B(T,x)=g_{\alpha_1,\alpha_3}(x_{02}+x_{13}-x_{03}-x_{12},\, x_{02}+x_{13}-x_{01}-x_{23})
\]
if $T$ is positive and complex conjugate otherwise.
 Here $x_{ij}\equiv x(v_iv_j)$, $\alpha_i\equiv\alpha_T(v_0v_i)/2\pi$,
\begin{equation}\label{eq:gac}
g_{a,c}(s,t):=\sum_{m\in\INTEGERS}\tilde\psi'_{a,c}(s+m)e^{\Imun \pi t(s+2m)},
\end{equation}
where 
\begin{equation}\label{eq:tpsip}
\tilde\psi'_{a,c}(s):=e^{-\pi \Imun s^2}\tilde\psi_{a,c}(s),
\end{equation}
\begin{equation}\label{eq:tpsi}
\tilde\psi_{a,c}(s):=\int_\REALS\psi_{a,c}(t)e^{-2\pi \Imun st}dt,
\end{equation}
and
\begin{multline}\label{eq:psi}
\psi_{a,c}(t):=\BQDILOG(t-2\cla(a+c))e^{-4\pi \Imun\cla a(t-\cla(a+c))}
e^{-\pi\Imun\cla^2(4(a-c)+1)/6}\\
=\BQDILOG(t+\cla(2b-1))e^{-4\pi \Imun\cla a(t+\cla b)}
e^{\pi\Imun\cla^2(4(a-b)+1)/6},\\
 b:=\frac12-a-c,\quad \cla:=\frac{\Imun}{2}(\la+\la^{-1}),
\end{multline}
where $\BQDILOG(x):=1/\QDILOG(x)$ with Faddeev's quantum dilogarithm
\begin{equation}
\QDILOG(x):=\exp\left(
\int_{C}
\frac{e^{-2\Imun xw}\, dw}{4\sinh(w\la)
\sinh(w/\la) w}\right),
\end{equation}
where the contour $C$ runs along the real axis, deviating into the upper half plane in the vicinity of the origin. It is easily seen that  $\QDILOG(x)$ depends on $\la$ only through the combination $\hbar$ defined by the formula
\begin{equation}
\hbar^{-1}:=\left(\la+\la^{-1}\right)^{2}=-4\cla^2
\end{equation}
which we suppose to be real positive, but the theory admits an analytic continuation to the complement of the non positive real axis of complex plane.
\begin{theorem}\label{main}
Let $X$ be a closed LSTOP 3-manifold. Then, the quantity
\begin{equation}\label{eq:si}
Z_\hbar(X):=e^{\Imun\pi\ell_X/4\hbar}\int_{[0,1]^{\Delta_1(X)}}\left(\prod_{T\in\Delta_3(X)}B\left(T,x\vert_{\Delta_1(T)}\right)\right)dx
\end{equation}
admits an analytic continuation to a meromorphic function of the complex shapes which is invariant under all shaped $2-3$ and $3-2$ Pachner moves (along balanced edges).
\end{theorem}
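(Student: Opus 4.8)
The plan is to follow the same overall strategy as the invariance proof for the original Teichm\"uller TQFT in~\cite{AndersenKashaev2011}: reduce the statement to a purely local identity relating the Boltzmann weights on the two sides of a $2$--$3$ move --- an ``integral pentagon'' --- together with the bookkeeping of the leveling term $\ell_X$; the difference is that the local identity is now obtained by transporting Faddeev's operator pentagon through the Weil--Gel'fand--Zak (WGZ) transform. The analytic continuation to complex shapes comes out as a byproduct: once each tetrahedral weight is shown to be meromorphic in the shape parameters, the state integral over the (compact) cube is meromorphic in the shapes, and two meromorphic functions that agree on the open set of strictly positive shapes agree everywhere.

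\emph{Analytic properties of the weights.} I would first record the properties of $g_{a,c}(s,t)$ inherited from Faddeev's quantum dilogarithm~\cite{MR1345554}: $\QDILOG$ is meromorphic and non-vanishing on $\COMPLEXS$, with explicitly located poles and zeros and the standard asymptotics (tending to $1$ in one direction along $\REALS$, Gaussian in the other). From these facts one deduces that $\psi_{a,c}$ of~\eqref{eq:psi}, its Fourier transform $\tilde\psi_{a,c}$ of~\eqref{eq:tpsi}, and the series~\eqref{eq:gac} defining $g_{a,c}$ converge for strictly positive shapes $a,c,b>0$; that $g_{a,c}(s,t)$ extends to a meromorphic function of $(\alpha_1,\alpha_3)\in\COMPLEXS^2$ whose poles lie on a locally finite union of affine hyperplanes; and that it obeys quasi-periodicity relations $g_{a,c}(s+1,t)=(\text{phase})\,g_{a,c}(s,t)$ and $g_{a,c}(s,t+1)=(\text{phase})\,g_{a,c}(s,t)$. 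The last point realizes $B(T,\cdot)$ as a section of a line bundle $\TWISTB_T$ over the torus $\REALS^{\Delta_1(T)}/\INTEGERS^{\Delta_1(T)}$.

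\emph{Well-definedness and convergence.} Combining the quasi-periodicity with the balancing condition at every edge of the closed manifold $X$ (the dihedral angles around it sum to $2\pi$), I would check that $\prod_{T\in\Delta_3(X)}B(T,x\vert_{\Delta_1(T)})$ is a genuinely $\INTEGERS^{\Delta_1(X)}$-periodic function of $x$ --- i.e.\ that the product of the $\TWISTB_T$ over the tetrahedra, restricted along the (balanced) edges of $X$, is canonically trivialized --- so that $Z_\hbar(X)$ is a well-posed integral over $[0,1]^{\Delta_1(X)}$. For strictly positive shapes the decay of $\QDILOG$ yields absolute convergence; for general complex shapes the integrand acquires poles as $x$ ranges over $\REALS^{\Delta_1(X)}$, and one continues $Z_\hbar$ analytically by complexifying the edge variables and deforming the integration cycle inside $\COMPLEXS^{\Delta_1(X)}/\INTEGERS^{\Delta_1(X)}$ so as to avoid the polar divisor, the outcome being independent of the deformation and hence meromorphic in the shapes.

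\emph{The integral pentagon and conclusion.} The heart of the matter is the local identity: for shapes related by a shaped $2$--$3$ move, with all shared edge variables identified and $y$ the variable on the single edge created by the move,
\[
\int_{0}^{1} B(T_1',x')\,B(T_2',x')\,B(T_3',x')\,dy
 = \zeta\, B(T_1,x)\,B(T_2,x),
\]
where $\zeta$ is an explicit constant --- a root of unity times a power of $e^{\Imun\pi/\hbar}$ --- that records the jump of the leveling. I would obtain this by writing Faddeev's pentagon identity $\QDILOG(\POS)\QDILOG(\MOM)=\QDILOG(\MOM)\QDILOG(\POS+\MOM)\QDILOG(\POS)$ for a canonical pair $\POS,\MOM$, conjugating every operator by the WGZ transform (a fixed unitary onto $L^2$-sections of a line bundle over a torus), and reading off the resulting identity of integral kernels; unravelling~\eqref{eq:psi}--\eqref{eq:gac} turns it into the display above, with orientations accounting for the complex conjugation in the definition of $B$. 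Finally, comparing $\ell_X$ before and after the move and verifying that its change cancels $\zeta$ through the prefactor $e^{\Imun\pi\ell_X/4\hbar}$ gives strict invariance of $Z_\hbar$ under the $2$--$3$ move, the $3$--$2$ move being its inverse; combined with the meromorphy from the previous step, this yields invariance of the analytically continued $Z_\hbar$. I expect the decisive obstacle to be exactly this last computation: interchanging the sum over $m\in\INTEGERS$ in~\eqref{eq:gac} with the $y$-integration, and performing the contour shifts that relate the various Gaussian and linear prefactors, in such a way that $\zeta$ emerges in closed form; the analytic, quasi-periodicity and leveling ingredients are technical but run parallel to~\cite{AndersenKashaev2011}.
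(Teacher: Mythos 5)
Your overall architecture matches the paper's: a local integral pentagon for the weights $g_{a,c}$, meromorphic continuation of the weights in the shapes, and bookkeeping of the level against the constant produced by the move. The main divergence is in how the pentagon is obtained. You propose to conjugate Faddeev's operator identity $\QDILOG(\MOM)\QDILOG(\POS)=\QDILOG(\POS)\QDILOG(\MOM+\POS)\QDILOG(\MOM)$ by the WGZ transform and read off kernels. The paper instead states the integral pentagon \eqref{eq:pent1} directly and proves it by Fourier analysis on the torus: multiplying by $e^{-\pi\Imun(\alpha\beta+\gamma\delta)}$ and integrating over the periodic variables $\beta,\delta$ produces Kronecker deltas that collapse the triple sum coming from \eqref{eq:gac} into a single absolutely convergent integral over $\REALS$, which after stripping the Gaussian ``primes'' and one more Fourier transform reduces to the single identity \eqref{eq:pent7}, evaluated in closed form by the integral Ramanujan formula \eqref{ramanint}--\eqref{ramanres2}. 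The two routes are not interchangeable as stated: the bare operator pentagon carries no shape parameters, no constraint \eqref{pentagonconditions}, and no constant $e^{-\pi\Imun P_e/12\hbar}$; what you actually need is the \emph{charged} pentagon for the kernels $\psi_{a_i,c_i}$, and establishing that charged version is exactly the computation the paper performs (it is where the constraint \eqref{pentagonconditions} and the constant are forced). So your ``decisive obstacle'' is correctly identified, but it is not merely a matter of interchanging a sum and an integral; it is the entire content of the proposition, and deferring it to ``reading off integral kernels'' leaves the key step unproved.

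Two further gaps. First, you never address the tetrahedral symmetries: the pentagon \eqref{eq:pent1} covers only one configuration of a $2$--$3$ move, and to get \emph{all} shaped Pachner moves one needs the full action of the vertex-permutation group on the weights, which the paper supplies through the identities \eqref{eq:fund1}--\eqref{eq:fund2} of Lemma~\ref{lemma2} and their geometric interpretation via bigon cones in Section~\ref{tqft-rules}; the paper explicitly flags this as a needed remaining ingredient, and ``orientations accounting for the complex conjugation'' does not substitute for it. Second, for the analytic continuation the paper does not only deform cycles: it proves (via the quasi-periodicity relations \eqref{eq:propor} and the functional equations \eqref{eq:func-phi}) that $\phi_\la\chi$ is entire on $\COMPLEXS^2$, thereby locating the polar divisor of the weight explicitly; your cycle-deformation argument presupposes such control of the poles, and you should also include the shape gauge invariance (imaginary shifts of edge variables compensated by contour shifts), without which ``invariance under shaped moves'' is not fully established. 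With the charged pentagon proved, the symmetries added, and the pole analysis made explicit, your plan closes up into essentially the paper's proof.
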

\begin{remark}
The state integral~\eqref{eq:si} extends to arbitrary (non closed) LSTOP 3-manifolds. In that case one has to integrate over only the state variables living on internal edges. The result is a meromorphic section of a line bundle over a complex torus $(\COMPLEXS^*)^{\Delta_1(\partial X)}$. More generally, by fixing the real parts of shape variables, one can treat that section as a distribution over their imaginary parts, the space of test functions being  the set of smooth sections of the dual line bundle.
\end{remark}
\begin{remark}
With additional normalization factors associated with regular vertices, the partition function~\eqref{eq:si} can be shown to be invariant under shaped $2-0$  ($0-2$) moves which remove (add) regular vertices. In particular, this additional property permits to define link invariants in compact oriented 3-manifolds by using arbitrary H-triangulations.
\end{remark}
We claim that the proposed model is equivalent to the Teichm\"uller TQFT of \cite{AndersenKashaev2011}, but  we postpone the proof for another publication. 

\subsection*{Acknowledgements} We would like to thank Christian Blanchet, Tudor Dimofte, Stavros Garoufalidis, Cameron Gordon, Gregor Masbaum, Hitoshi Murakami, Yuri Neretin, Bertrand Patureau-Mirand, Nikolai Reshetikhin, Roland van der Veen for valuable discussions. 

\section{Proof of Theorem~\ref{main}}
Along with \eqref{eq:psi}, we also define
\begin{multline}\label{eq:bpsi}
\bar\psi_{a,c}(t):=\QDILOG(t+2\cla(a+c))e^{-4\pi \Imun\cla a(t+\cla(a+c))}e^{\pi\Imun\cla^2(4(a-c)+1)/6}\\
=\QDILOG(t+\cla(1-2b))e^{-4\pi \Imun\cla a(t-\cla b)}
e^{\pi\Imun\cla^2(4(b-a)-1)/6},\quad b:=\frac12-a-c.\end{multline}
We have two equalities
\begin{equation}\label{eq:tpsip-psi}
\tilde\psi'_{a,c}(x)=e^{-\frac{\pi\Imun}{12}}\psi_{c,b}(x),
\end{equation}
and
\begin{equation}\label{eq:tpsi-bpsi}
\tilde\psi_{a,c}(x)=e^{\frac{\pi\Imun}{12}}\bar\psi_{b,c}(-x)\Leftrightarrow\bar\psi_{a,c}(x)=e^{-\frac{\pi\Imun}{12}}\tilde\psi_{b,c}(-x).
\end{equation}
\subsection{The Pentagon identity}
\begin{proposition}
For any $(\alpha,\beta,\gamma,\delta)\in\REALS^4$, the following integral identity is satisfied:
\begin{multline}\label{eq:pent1}
\int_{[0,1]}g_{a_4,c_4}(\gamma-\sigma,\alpha+\delta-\sigma)g_{a_2,c_2}(\sigma,\beta+\delta)g_{a_0,c_0}(\alpha-\sigma,\beta+\gamma-\sigma)
d\sigma\\
=e^{-\frac{\pi\Imun}{12\hbar} P_e}g_{a_1,c_1}(\alpha,\beta)g_{a_3,c_3}(\gamma,\delta),
\end{multline}
where
\[
P_e:=2(c_0+a_2+c_4)-\frac12,
\]
and the set of positive reals $\{a_i, c_i\vert\ i = 0,1,\ldots,4\} \subset\REALS_{>0}$,  is such that
\[
b_i:=\frac12-a_i-c_i>0,\  i = 0,1,\ldots,4,
\]
and
\begin{equation}\label{pentagonconditions}
 a_1=a_0+a_2,\ a_3=a_2+a_4,\ c_1=c_0+a_4,\ c_3=a_0+c_4,\ c_2= c_1+c_3.
\end{equation}
\end{proposition}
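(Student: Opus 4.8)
The plan is to transport \eqref{eq:pent1} through the Weil--Gel'fand--Zak transform built into the definition \eqref{eq:gac} and to reduce it to the classical integral pentagon identity for Faddeev's quantum dilogarithm. Using \eqref{eq:tpsip-psi} one first rewrites $g_{a,c}(s,t)=e^{-\pi\Imun/12}\sum_{m\in\INTEGERS}\psi_{c,b}(s+m)\,e^{\Imun\pi t(s+2m)}$, so that each $g$ is literally the Zak transform of one of the functions $\psi_{\cdot,\cdot}$, which by \eqref{eq:psi} is $\BQDILOG$ evaluated at a shifted argument times explicit Gaussian and linear exponentials. Substituting the three factors on the left of \eqref{eq:pent1}, I would interchange the integral over $\sigma\in[0,1]$ with the three lattice sums; the interchange is legitimate because strict positivity of the $a_i,c_i,b_i$ together with \eqref{pentagonconditions} forces absolute convergence, using the boundedness of $|\QDILOG|$ along $\REALS$ and the Gaussian damping that appears after rotating the relevant contours.

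After the interchange, the quasi-periodicity of $g_{a_4,c_4}$ and $g_{a_0,c_0}$ in their first arguments (immediate from \eqref{eq:gac} by reindexing $m$) is used to absorb one lattice sum into the $[0,1]$-integration, converting it into an integration over all of $\REALS$; here \eqref{pentagonconditions} is exactly what makes the various quasi-periodicity multipliers cancel. The two remaining sums, evaluated against the linear exponentials, collapse after completing the square in the Gaussian exponents (the standard Weil-representation computation), and what is left is a one-dimensional integral over $\REALS$ of a product of three copies of $\BQDILOG$, with arguments shifted by real-linear combinations of $\alpha,\beta,\gamma,\delta$ and the $c_i$, multiplied by a single Gaussian-free phase. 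This identity is precisely the integral pentagon relation for $\QDILOG$ (equivalently, the non-compact quantum beta integral; see \cite{AndersenKashaev2011,MR1345554} and the references therein). Invoking it reduces the proposition to checking that the linear substitution dictated by \eqref{pentagonconditions} matches the arguments of $\BQDILOG$ on both sides and that all the remaining exponentials collapse to $e^{-\frac{\pi\Imun}{12\hbar}P_e}$; this last step uses $\cla^2=-1/(4\hbar)$, the constants $\pm\pi\Imun/12$ of \eqref{eq:tpsip-psi}--\eqref{eq:tpsi-bpsi}, and Faddeev's inversion relation expressing $\QDILOG(z)\QDILOG(-z)$ as a Gaussian in $z$ times an explicit constant.

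An alternative, which may be cleaner to write up, bypasses the unfolding. One characterizes $g_{a,c}(s,t)$ by functional equations: the two quasi-periodicities in $s$ and in $t$, together with the pair of inhomogeneous first-order difference equations in $s$ with steps $\Imun\la^{\pm1}$ that $g_{a,c}$ inherits from the functional equation $\QDILOG(x-\Imun\la^{\pm1}/2)=(1+e^{2\pi\la^{\pm1}x})\,\QDILOG(x+\Imun\la^{\pm1}/2)$. One then verifies that both sides of \eqref{eq:pent1}, regarded as functions of $\beta$ with the remaining parameters fixed, are holomorphic in a common strip, satisfy the same difference equations and the same quasi-periodicity, and have comparable growth, whence their ratio is constant. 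The constant --- including the phase $e^{-\frac{\pi\Imun}{12\hbar}P_e}$ --- is then pinned down either by a single convenient special-value evaluation or, more robustly, by matching leading asymptotics in the semiclassical limit $\hbar\to0$, where stationary phase governs both sides and the critical value reproduces the classical five-term dilogarithm identity.

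The main obstacle is the phase bookkeeping: following every Gaussian and linear exponential through the interchange of sums and integrals and the completions of squares, and recognizing the output \emph{exactly} as the standard $\QDILOG$-pentagon. The sensitive ingredients are the $\pm\pi\Imun/12$ of \eqref{eq:tpsip-psi}--\eqref{eq:tpsi-bpsi} and the normalization $e^{\pi\Imun\cla^2(4(a-c)+1)/6}$ in \eqref{eq:psi}, which are engineered so that the five tetrahedral weights of a $2\to3$ Pachner move combine into precisely $e^{-\frac{\pi\Imun}{12\hbar}P_e}$ with $P_e=2(c_0+a_2+c_4)-\frac12$; getting these constants right is exactly what will make $Z_\hbar$ of Theorem~\ref{main} a genuine invariant. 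Convergence of the interchanged integrals and sums under $a_i,c_i,b_i>0$ is a secondary technical matter, handled as above.
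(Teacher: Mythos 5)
Your main route is essentially the paper's: unfold the lattice sums in \eqref{eq:gac}, fuse the $[0,1]$-integration over $\sigma$ with a surviving lattice sum into an integral over $\REALS$, and reduce to a known integral identity for $\QDILOG$. (The paper then performs one further Fourier transform, in $\gamma$, so as to land on the Ramanujan beta integral $\Psi(u,v,w)$ of \eqref{ramanint} rather than on the pentagon form you want to quote; it evaluates both sides in closed form via \eqref{ramanres1}--\eqref{ramanres2} and the constraints \eqref{pentagonconditions} together with the value of $P_e$ emerge from equating those two evaluations.) However, the step by which you collapse the triple lattice sum does not work as described. The phases $e^{\Imun\pi t(s+2m)}$ in \eqref{eq:gac} are \emph{linear}, not quadratic, in the summation indices, so there is no square to complete and no Weil-representation Gaussian sum to evaluate; quasi-periodicity in $\sigma$ lets you unfold only \emph{one} of the three sums against $\int_0^1 d\sigma$, and the other two do not disappear on their own. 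What actually kills them in the paper is character orthogonality: both sides of \eqref{eq:pent1} are quasi-periodic in $\beta$ and $\delta$ with matching multipliers, so one integrates the identity against $e^{-\pi\Imun(\alpha\beta+\gamma\delta)}$ over $(\beta,\delta)\in[0,1]^2$, producing Kronecker deltas $\delta_{m,-l}\delta_{k,-l}$; only then does the single remaining sum over $l$ combine with $\int_0^1d\sigma$ into $\int_\REALS d\sigma$, via the elementary identity $e^{\pi \Imun((\alpha-\sigma)(\gamma-\sigma-2l)+(\gamma-\sigma)(\alpha-\sigma-2l))}=e^{2\pi \Imun (\alpha-\sigma-l)(\gamma-\sigma-l)}$.

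The second issue is that you defer the phase and parameter bookkeeping as a technicality, but that bookkeeping \emph{is} the proposition: the relations \eqref{pentagonconditions} and the prefactor $e^{-\frac{\pi\Imun}{12\hbar}P_e}$ are not hypotheses to be checked against a cited pentagon identity with its own normalization; in the paper they are the output of matching the two explicit evaluations \eqref{eq:pent8} and \eqref{eq:pent9}. As written, your proposal establishes the general shape of the identity but proves neither the constraints nor the constant. Your alternative argument (characterizing both sides by difference equations, quasi-periodicity and growth, then fixing the constant by a special value or semiclassical asymptotics) is genuinely different from the paper's and could in principle work, but it requires a uniqueness statement for solutions of the $\Imun\la^{\pm1}$-difference equations with controlled growth and an independent determination of the constant, none of which is supplied; the paper's direct reduction to the Ramanujan integral avoids all of that.
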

\begin{proof} Multiplying \eqref{eq:pent1} by $e^{-\pi \Imun(\alpha\beta+\gamma\delta)}$, integrating over the unit square in the $1$-periodic variables $\beta$ and $\delta$, and using definition~\eqref{eq:gac}, we obtain an equivalent identity 
\begin{multline}\label{eq:pent2}
\int_{[0,1]^3}\sum_{k,l,m\in\INTEGERS}\tilde\psi'_{a_4,c_4}(\gamma-\sigma+k)\tilde\psi'_{a_2,c_2}(\sigma+l)\tilde\psi'_{a_0,c_0}(\alpha-\sigma+m)\\
\times e^{\pi \Imun ((\alpha+\delta-\sigma)(\gamma-\sigma+2k)+(\beta+\delta)(\sigma+2l)+(\beta+\gamma-\sigma)(\alpha-\sigma+2m)-\alpha\beta-\gamma\delta)}
d\sigma d\beta d\delta\\
=e^{-\frac{\pi\Imun}{12\hbar} P_e}\tilde\psi'_{a_1,c_1}(\alpha)\tilde\psi'_{a_3,c_3}(\gamma)
\end{multline}
where the triple sum is absolutely convergent. Exchanging the order of the integrations and summations, the double integral over $\beta$ and $\delta$ gives two Kronecker's deltas, $\delta_{m,-l}\delta_{k,-l}$,  which permit to eliminate the summations over $k$ and $m$ with the result
\begin{multline}\label{eq:pent3}
\mathrm{l.h.s.\ of\ \eqref{eq:pent2}}=
\int_{[0,1]}\sum_{l\in\INTEGERS}\tilde\psi'_{a_4,c_4}(\gamma-\sigma-l)\tilde\psi'_{a_2,c_2}(\sigma+l)\tilde\psi'_{a_0,c_0}(\alpha-\sigma-l)\\
\times e^{\pi \Imun ((\alpha-\sigma)(\gamma-\sigma-2l)+(\gamma-\sigma)(\alpha-\sigma-2l))}
d\sigma.
\end{multline}
Now, the identity 
\[
e^{\pi \Imun((\alpha-\sigma)(\gamma-\sigma-2l)+(\gamma-\sigma)(\alpha-\sigma-2l))}
=e^{2\pi \Imun (\alpha-\sigma-l)(\gamma-\sigma-l)},\quad \forall l\in\INTEGERS,
\]
allows us to combine the integral and the sum in \eqref{eq:pent3} into an absolutely convergent integral over the real axis:
\begin{equation}\label{eq:pent4}
\mathrm{r.h.s.\ of\ \eqref{eq:pent3}}=
\int_{\REALS}\tilde\psi'_{a_4,c_4}(\gamma-\sigma)\tilde\psi'_{a_2,c_2}(\sigma)\tilde\psi'_{a_0,c_0}(\alpha-\sigma)e^{2\pi \Imun(\alpha-\sigma)(\gamma-\sigma)}
d\sigma.
\end{equation}
By using definition~\eqref{eq:tpsip}, we eliminate four ``primes" out of five and we obtain the following equivalent form of \eqref{eq:pent2}
\begin{multline}\label{eq:pent5}
\int_{\REALS}\tilde\psi_{a_4,c_4}(\gamma-\sigma)\tilde\psi'_{a_2,c_2}(\sigma)\tilde\psi_{a_0,c_0}(\alpha-\sigma)d\sigma\\
=e^{-\frac{\pi\Imun}{12\hbar} P_e-2\pi\Imun\alpha\gamma}\tilde\psi_{a_1,c_1}(\alpha)\tilde\psi_{a_3,c_3}(\gamma).
\end{multline}
Let us apply now the Fourier transformation over the variable $\gamma$, namely multiply \eqref{eq:pent5} by $e^{2\pi\Imun\beta\gamma}$ and integrate over $\gamma$ along the real axis. As all integrals are absolutely convergent, we can exchange the order of integrations in the left hand side, so that the integration over $\gamma$ can be absorbed on both sides of the identity by using definition~\eqref{eq:tpsi}. In this way, we come to the equality:
\begin{equation}\label{eq:pent6}
\int_{\REALS}\tilde\psi'_{a_2,c_2}(\sigma)\tilde\psi_{a_0,c_0}(\alpha-\sigma)e^{2\pi\Imun\beta\sigma}d\sigma
=e^{-\frac{\pi\Imun}{12\hbar} P_e}\frac{\tilde\psi_{a_1,c_1}(\alpha)\psi_{a_3,c_3}(\beta-\alpha)}{\psi_{a_4,c_4}(\beta)},
\end{equation}
which, by using the the equalities~\eqref{eq:tpsip-psi} and  \eqref{eq:tpsi-bpsi}, can also be writen in the form
\begin{equation}\label{eq:pent7}
\int_{\REALS}\psi_{c_2,b_2}(\sigma)\bar\psi_{b_0,c_0}(\sigma-\alpha)e^{2\pi\Imun\beta\sigma}d\sigma
=e^{\frac{\pi\Imun}{12}\left(1-\frac1\hbar\right)P_e}\frac{\bar\psi_{b_1,c_1}(-\alpha)\psi_{a_3,c_3}(\beta-\alpha)}{\psi_{a_4,c_4}(\beta)}.
\end{equation}
Now, it is a straightforward verification that \eqref{eq:pent7} is a direct consequence of the integral Ramanujan identity. Indeed, for the left hand side of \eqref{eq:pent7}, we have that
\begin{multline}\label{eq:pent8}
\mathrm{l.h.s.\ of\ \eqref{eq:pent7}}\\=\int_{\REALS}\frac{\QDILOG(\sigma-\alpha+2\cla(b_0+c_0))}{\QDILOG(\sigma-2\cla(c_2+b_2))}e^{-4\pi \Imun\cla c_2(\sigma-\cla(c_2+b_2))}
e^{-\pi\Imun\cla^2(4(c_2-b_2)+1)/6}\\
\times e^{-4\pi \Imun\cla b_0(\sigma-\alpha+\cla(b_0+c_0))}e^{\pi\Imun\cla^2(4(b_0-c_0)+1)/6}e^{2\pi\Imun\beta\sigma}d\sigma\\
=\int_{\REALS}\frac{\QDILOG(\sigma-\alpha+\cla(1-2a_0))}{\QDILOG(\sigma-\cla(1-2a_2))}
e^{2\pi\Imun(\beta-2\cla (b_0+c_2))\sigma}d\sigma\\
\times e^{4\pi \Imun\cla b_0\alpha}e^{4\pi \Imun\cla^2 (a_0b_0-a_2c_2)}e^{2\pi\Imun\cla^2(a_0-b_0-a_2+c_2)/3}\\
=\Psi(\cla(1-2a_0)-\alpha,\cla(2a_2-1),\beta-2\cla (b_0+c_2))
\\
\times e^{4\pi \Imun\cla b_0\alpha}e^{4\pi \Imun\cla^2 (a_0b_0-a_2c_2)}e^{2\pi\Imun\cla^2(a_0-b_0-a_2+c_2)/3}\\
=\frac{\QDILOG(\cla(1-2(a_0+a_2))-\alpha)\QDILOG
(\cla(1-2 (b_0+c_2))+\beta)}{\QDILOG(\cla(2(c_0+b_2)-1)-\alpha+\beta)}e^{4\pi \Imun\cla (b_0\alpha-a_2\beta)}\\
  \times e^{4\pi \Imun\cla^2 (a_0b_0+a_2c_2+2a_2b_0) )}e^{2\pi\Imun\cla^2(a_0-b_0-a_2+c_2)/3}e^{\pi\Imun(1-4\cla^2)/12},
\end{multline}
while for the right hand side of \eqref{eq:pent7}, we have that
\begin{multline}\label{eq:pent9}
\mathrm{r.h.s.\ of\ \eqref{eq:pent7}}\\
=\frac{\QDILOG(\cla(1-2a_1)-\alpha)\QDILOG(\cla(2b_4-1)+\beta)}{\QDILOG(\cla(2b_3-1)-\alpha+\beta)}
e^{4\pi \Imun\cla ((b_1+a_3)\alpha+(a_4-a_3)\beta)}\\
\times e^{4\pi \Imun\cla^2 (a_1b_1-a_3b_3+a_4b_4)}e^{2\pi\Imun\cla^2(a_1-b_1+a_3-b_3-a_4+b_4)/3}
e^{\pi\Imun((1-\frac1\hbar)P_e-2\cla^2)/12}.
\end{multline}
The final expressions in \eqref{eq:pent8} and \eqref{eq:pent9} coincide provided
\begin{equation}
a_1=a_0+a_2,\ b_0+c_2+b_4=1,\ b_3=c_0+b_2,\ b_0=b_1+a_3,\ a_3=a_2+a_4
\end{equation}
which is equivalent to \eqref{pentagonconditions}.
\end{proof}
\subsection{Symmetries}
We have that
\begin{equation}\label{eq:eq:tpsip-psi}
\tilde\psi'_{a,c}(x)=(G^{-1}F^{-1}\psi_{a,c})(x),
\end{equation}
where 
\begin{equation}\label{eq:g}
(Gf)(x):=e^{\pi \Imun x^2}f(x),\quad \overline{Gf}:=G^{-1}\bar f
\end{equation}
and
\begin{equation}\label{eq:f}
(Ff)(x):=\int_{\REALS}f(y)e^{2\pi\Imun xy}dy,\quad \overline{Ff}:=F^{-1}\bar f
\end{equation}
If we define the Weil--Gel'fand--Zak (WGZ) transformation \cite{MR0005741,MR0039154,Zak1967,MR2790054} by the formula
\begin{equation}\label{eq:wgz}
(Wf)(x,y)=e^{\pi\Imun xy}\sum_{m\in\INTEGERS}f(x+m)e^{2\pi\Imun my},\quad (\overline{Wf})(x,y)=(W\bar f)(x,-y)
\end{equation}
then we have that
\begin{equation}\label{eq:gac=w}
g_{a,c}=W\tilde\psi'_{a,c}=WG^{-1}F^{-1}\psi_{a,c},\quad \bar g_{a,c}(x,y)=(WGF\bar\psi_{a,c})(x,-y)
\end{equation}
The WGZ-transformation associates to a function in the Schwartz class  $f\in {\mathcal S}\equiv\mathcal{S}(\REALS)$ a smooth section $g=Wf\in\Gamma(L)$ of 
the complex line bundle $L$ over the two torus corresponding to the quasi-periodicity properties:
\[
g(x+1,y)=e^{-\pi\Imun y}g(x,y),\quad g(x,y+1)=e^{\pi\Imun x}g(x,y),
\]
The inverse of  WGZ-transformation is given by the formula:
\[
(W^{-1}g)(x)=\int_0^1g(x,y)e^{-\pi\Imun xy}dy.
\]
The WGZ-transform is an isometry from $\mathcal S$ to $\Gamma(L)$.
\begin{lemma}\label{lemma1}For any function $f\in\mathcal S$, the following identities hold true:
\begin{equation}\label{eq:wf}
(WFf)(x,y)=(Wf)(-y,x),\quad (WF^{-1}f)(x,y)=(Wf)(y,-x),
\end{equation}
and 
\begin{multline}\label{eq:wg}
(WGf)(x,y)=(Wf)\left(x,x+y+1/2\right)e^{-\pi\Imun x/2},\\
(WG^{-1}f)\left(x,y\right)=(Wf)(x,y-x-1/2)e^{\pi\Imun x/2}.
\end{multline}
\end{lemma}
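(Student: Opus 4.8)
The plan is to derive all four identities directly from the definitions~\eqref{eq:g}, \eqref{eq:f} and~\eqref{eq:wgz}, treating the two Fourier identities in~\eqref{eq:wf} together and the two Gaussian identities in~\eqref{eq:wg} together. In each pair the ``$-1$'' version is obtained from the other one by the substitution $f\mapsto F^{-1}f$ (respectively $f\mapsto G^{-1}f$), using $FF^{-1}=\mathrm{id}$ (respectively $GG^{-1}=\mathrm{id}$), followed by a relabelling of the two torus variables; since $F^{\pm1}$ and $G^{\pm1}$ all preserve the Schwartz class $\mathcal S$, no convergence issue arises in this reduction. Hence it suffices to establish $(WFf)(x,y)=(Wf)(-y,x)$ and $(WGf)(x,y)=(Wf)(x,x+y+1/2)e^{-\Imun\pi x/2}$.

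For the Fourier identity I would begin from
\[
(WFf)(x,y)=e^{\Imun\pi xy}\sum_{m\in\INTEGERS}(Ff)(x+m)e^{2\pi\Imun my}
\]
and apply the Poisson summation formula to the function $t\mapsto(Ff)(x+t)e^{2\pi\Imun ty}$, which lies in $\mathcal S$ because $f\in\mathcal S$ forces $Ff\in\mathcal S$, so that all sums and integrals converge absolutely and every interchange is legitimate. Computing the Fourier coefficients of that function (via the substitution $s=x+t$) and using the inversion identity $F(Ff)(u)=f(-u)$ in the normalisation~\eqref{eq:f}, the sum over $m$ turns into $e^{-2\pi\Imun xy}\sum_{n\in\INTEGERS}f(n-y)e^{2\pi\Imun nx}$; comparing with the definition of $(Wf)(-y,x)$ and collecting the remaining Gaussian phase $e^{\Imun\pi xy}$ gives the claim.

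For the Gaussian identity I would expand $(Gf)(x+m)=e^{\Imun\pi(x+m)^2}f(x+m)$ inside the sum defining $(WGf)(x,y)$ and use $m^2\equiv m\pmod 2$, i.e.\ $e^{\Imun\pi m^2}=e^{\Imun\pi m}$, to write $e^{\Imun\pi(x+m)^2}=e^{\Imun\pi x^2}e^{2\pi\Imun m(x+1/2)}$. This exhibits $(WGf)(x,y)$ as $e^{\Imun\pi x^2}$ times the sum defining $(Wf)(x,x+y+1/2)$, up to its explicit quasi-periodicity phase; collecting the phases $e^{\Imun\pi xy}$, $e^{\Imun\pi x^2}$ and $e^{-\Imun\pi x(x+y+1/2)}$ leaves exactly the factor $e^{-\Imun\pi x/2}$, which is the assertion.

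I do not anticipate a genuine obstacle: the entire content is the Poisson summation step for~\eqref{eq:wf} and the parity congruence $m^2\equiv m\pmod 2$ for~\eqref{eq:wg}. The only point requiring (routine) care is the justification of the term-by-term manipulations, which is immediate from the Schwartz hypothesis on $f$ and the invariance of $\mathcal S$ under $F^{\pm1}$ and $G^{\pm1}$.
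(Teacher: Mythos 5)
Your proposal is correct and follows essentially the same route as the paper: the Gaussian identity via the parity congruence $e^{\pi\Imun m^2}=e^{\pi\Imun m}$ is computed identically, and your Poisson-summation step for the Fourier identity is just a packaged form of the paper's argument, which unfolds the Fourier integral over unit intervals and expands the resulting quasi-periodic function in a Fourier series. The reduction of the $F^{-1}$ and $G^{-1}$ cases by substitution is also how the paper implicitly handles them, so there is nothing to add.
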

\begin{proof}
For \eqref{eq:wf}, we have that
\begin{multline*}
Ff(x)=\int_\REALS f(y)e^{2\pi \Imun xy}dy=\sum_{m\in\INTEGERS}
\int_m^{m+1}f(y)e^{2\pi \Imun xy}dy\\
=\sum_{m\in\INTEGERS}
\int_0^1f(y+m)e^{2\pi \Imun x(y+m)}dy=\int_0^1(Wf)(y,x) e^{\pi \Imun xy}dy=\int_0^1h(x,y)dy,
\end{multline*}
where the function $h(x,y):=(Wf)(y,x) e^{\pi \Imun xy}$ has the following properties:
\[
h(x+1,y)=e^{2\pi\Imun y}h(x,y),\quad h(x,y+1)=h(x,y).
\]
In particular, it can be expanded in Fourier series
\[
h(x,y)=\sum_{m\in\INTEGERS}h_m(x)e^{2\pi\Imun ym},
\]
where 
\[
h_m(x)=\int_{[0,1]}h(x,y)e^{-2\pi\Imun ym}dy=\int_{[0,1]}h(x-m,y)dy=Ff(x-m).
\]
Thus, 
\[
(Wf)(y,x) =h(x,y)e^{-\pi \Imun xy}=e^{-\pi \Imun xy}\sum_{m\in\INTEGERS}Ff(x-m)e^{2\pi\Imun ym}=(WFf)(x,-y).
\]
For \eqref{eq:wg}, we have that
\begin{multline*}
(WGf)(x,y)=e^{\pi\Imun xy}\sum_{m\in\INTEGERS}e^{\pi\Imun (x^2+2xm+m)}f(x+m)e^{2\pi\Imun my}\\
=e^{-\pi\Imun x/2}(Wf)(x,x+y+1/2)
\end{multline*}
\end{proof}
\begin{lemma}\label{lemma2}
The following equalities hold true:
\begin{equation}\label{eq:fund1}
g_{a,c}(x,y)=\bar g_{a,b}(-x,y-x-1/2)e^{\pi\Imun x/2},
\end{equation}
and
\begin{equation}\label{eq:fund2}
 g_{a,c}(x,y)=e^{-\frac{\pi\Imun}{6}}\bar g_{b,c}(x-y-1/2,-y)e^{-\pi\Imun y/2}.
\end{equation}
\end{lemma}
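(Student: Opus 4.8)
The plan is to lift both identities to the level of operators acting on the Schwartz space and to push the operators $F$ and $G$ through the WGZ transform by means of Lemma~\ref{lemma1}. Besides $F$ and $G$ I would use the reflection $(Pf)(x):=f(-x)$, together with the elementary facts $F^{2}=P$, $GP=PG$, and the one-line computation (in the style of the proof of Lemma~\ref{lemma1}) $(WPf)(x,y)=(Wf)(-x,-y)$; composing the latter with \eqref{eq:wg} gives $(WGPf)(x,y)=(Wf)(-x,-x-y-1/2)e^{-\Imun\pi x/2}$. I would also record, by chaining \eqref{eq:wf} and \eqref{eq:wg} starting from the identity $\bar g_{a,c}(x,y)=(WGF\bar\psi_{a,c})(x,-y)$ of \eqref{eq:gac=w}, the compact formula $\bar g_{p,q}(u,v)=(W\bar\psi_{p,q})(v-u-1/2,\,u)\,e^{-\Imun\pi u/2}$, valid for all admissible parameters $(p,q)$. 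Throughout, the identity $g_{a,c}=W\tilde\psi'_{a,c}$ from \eqref{eq:gac=w} is used directly.

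The remaining ingredient is the translation of the scalar relations \eqref{eq:tpsip-psi} and \eqref{eq:tpsi-bpsi} into operator form. Using $\tilde\psi_{p,q}=F^{-1}\psi_{p,q}$ (definition \eqref{eq:tpsi}), $\tilde\psi'_{p,q}=G^{-1}\tilde\psi_{p,q}$ (\eqref{eq:tpsip}), and the elementary equality $\tilde\psi_{p,q}(-x)=(F\psi_{p,q})(x)$, the relations \eqref{eq:tpsip-psi} and \eqref{eq:tpsi-bpsi} combine, with $b=\frac12-a-c$, to $\bar\psi_{a,b}=F\tilde\psi'_{a,c}$ and $\bar\psi_{b,c}=e^{-\Imun\pi/12}\,GP\,\tilde\psi'_{a,c}$. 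I want to stress that the quantum dilogarithm inversion relation is already built into \eqref{eq:tpsi-bpsi}, so it does not have to be invoked anew at this point.

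With these formulas in hand each of \eqref{eq:fund1} and \eqref{eq:fund2} becomes a substitution. For \eqref{eq:fund1} I would specialise the $\bar g$-formula at $(p,q)=(a,b)$ and $(u,v)=(-x,\,y-x-1/2)$, use the quasi-periodicity $(Wf)(x+1,y)=e^{-\Imun\pi y}(Wf)(x,y)$ of WGZ sections to absorb the integer shift that appears in the first argument, and obtain that the right-hand side equals $(W\bar\psi_{a,b})(y,-x)$; inserting $\bar\psi_{a,b}=F\tilde\psi'_{a,c}$ and applying the first identity of \eqref{eq:wf} turns this into $(W\tilde\psi'_{a,c})(x,y)=g_{a,c}(x,y)$, as required. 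For \eqref{eq:fund2} the analogous specialisation at $(p,q)=(b,c)$ and $(u,v)=(x-y-1/2,\,-y)$, after collecting the scalar prefactors (the given $e^{-\Imun\pi/6}$ together with the $e^{\Imun\pi/4}$ coming out of the exponent $-\Imun\pi(x-y-1/2)/2$ produces $e^{\Imun\pi/12}$, which will cancel the $e^{-\Imun\pi/12}$ attached to $\bar\psi_{b,c}$), reduces the right-hand side to $e^{\Imun\pi/12}(W\bar\psi_{b,c})(-x,\,x-y-1/2)e^{-\Imun\pi x/2}$; feeding in $\bar\psi_{b,c}=e^{-\Imun\pi/12}GP\tilde\psi'_{a,c}$ and the formula for $WGP$ derived above collapses this once more to $g_{a,c}(x,y)$.

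I expect the only real difficulty to be phase bookkeeping: the affine changes of variable in the two arguments of the $W$-sections interact both with the multipliers $e^{\pm\Imun\pi x/2}$, $e^{\pm\Imun\pi y/2}$ produced by \eqref{eq:wg} and with the quasi-periodicity multipliers, and a single mislaid sign or factor $e^{\Imun\pi/12}$ would spoil the identities; I would therefore carry out the computation with all phases explicit rather than up to constants. There is no analytic subtlety: under the standing positivity of the shapes, $a,c,b>0$, the functions $\tilde\psi'_{a,c}$ and $\bar\psi_{a,c}$ belong to $\mathcal{S}$, so Lemma~\ref{lemma1}, the isometry property of $W$, and the quasi-periodicity of WGZ sections all apply verbatim, and all the manipulations above are legitimate.
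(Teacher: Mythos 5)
Your proof is correct and follows essentially the same route as the paper: both arguments push $F$, $G$ (and $F^2=P$) through the WGZ transform via Lemma~\ref{lemma1} and then feed in the scalar relations \eqref{eq:tpsip-psi} and \eqref{eq:tpsi-bpsi}; your phase bookkeeping (the $e^{\pi\Imun/12}$ and $e^{\pi\Imun/4}$ factors, the quasi-periodicity shift absorbing $y-1\mapsto y$) all checks out. The only difference is organizational --- the paper first records the intermediate $g$-to-$g$ relation \eqref{eq:gac-gba} and obtains \eqref{eq:fund2} by composing it with \eqref{eq:fund1}, whereas you verify each identity by direct substitution into a closed formula for $\bar g_{p,q}$ --- which is immaterial.
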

\begin{proof}
From \eqref{eq:gac=w} and Lemma~\ref{lemma1} we have that
\begin{multline}\label{eq:gac-psiac}
g_{a,c}(x,y)=(WG^{-1}F^{-1}\psi_{a,c})(x,y)=(WF^{-1}\psi_{a,c})(x,y-x-1/2)e^{\pi\Imun x/2}\\
=(W\psi_{a,c})(y-x-1/2,-x)e^{\pi\Imun x/2}.
\end{multline}
On the other hand side, equality~\eqref{eq:tpsip-psi} implies that
\begin{equation}\label{eq:gac-psicb}
g_{a,c}=W\tilde\psi'_{a,c}=e^{-\frac{\pi\Imun}{12}}W\psi_{c,b}\Leftrightarrow W\psi_{a,c}=e^{\frac{\pi\Imun}{12}}g_{b,a}.
\end{equation}
Combining \eqref{eq:gac-psiac} and \eqref{eq:gac-psicb}, we obtain that
\begin{multline}\label{eq:gac-gba}
g_{a,c}(x,y)=e^{\frac{\pi\Imun}{12}}g_{b,a}(y-x-1/2,-x)e^{\pi\Imun x/2}\\
\Leftrightarrow
g_{a,c}(x,y)=e^{-\frac{\pi\Imun}{12}}g_{c,b}(-y,x-y+1/2)e^{\pi\Imun y/2}
\end{multline}

From \eqref{eq:gac=w}, \eqref{eq:tpsi-bpsi}, \eqref{eq:gac-psicb}, and Lemma~\ref{lemma1} it follows that
\begin{multline}\label{eq:bgac-gab}
\bar g_{a,c}(x,y)=e^{-\frac{\pi\Imun}{12}}(WF^2\psi_{b,c})(x,-y+x+1/2)e^{-\pi\Imun x/2}\\=
e^{-\frac{\pi\Imun}{12}}(W\psi_{b,c})(-x,y-x-1/2)e^{-\pi\Imun x/2}=
g_{a,b}(-x,y-x-1/2)e^{-\pi\Imun x/2}
\end{multline}
which is equivalent to \eqref{eq:fund1}.
Combining \eqref{eq:bgac-gab} and \eqref{eq:gac-gba} we also have that
\begin{equation}
\bar g_{a,c}(x,y)=g_{a,b}(-x,y-x-1/2)e^{-\pi\Imun x/2}=e^{\frac{\pi\Imun}{6}}g_{b,c}(-y+x+1/2,-y)e^{-\pi\Imun y/2}
\end{equation}
which is equivalent to \eqref{eq:fund2}.
\end{proof}

\subsection{Analytical continuation}

Analytical properties of $g_{a,c}(s,t)$ are determined by those of the function $\phi_\la(x,y)=(W\QDILOG)(x,y)$. The relation \eqref{eq:gac-gba} implies that the following equalities hold true
\begin{multline}\label{eq:propor}
\phi_\la(x,y)=\zeta_oe^{\pi\Imun\left(\cla(x+y-\frac12)-y/2\right)}\phi_\la(\cla+y,1/2-x-y)\\
=\zeta_o^{-1}e^{\pi\Imun\left(\cla y+(x-\cla)/2\right)}\phi_\la(\cla+1/2-x-y,x-\cla),\quad \zeta_o=\exp(\pi\Imun(1-4\cla^2)/12).
\end{multline}
Besides that, the functional equations on $\QDILOG(x)$ imply the following functional equations on $\phi_\la(x,y)$:
\begin{equation}\label{eq:func-phi}
\phi_\la(x,y)+e^{\pi\la^{\pm1}x-\pi\Imun\la^{\pm2}}\phi_\la(x,y-\Imun\la^{\pm1})=e^{-\pi\la^{\pm1}}\phi_\la(x-\Imun\la^{\pm1},y).
\end{equation}
Define functions
\begin{equation}
\xi(x):=\prod_{(m,n)\in\INTEGERS_{\ge0}^2}\left(1-e^{2\pi(\Imun x-\la m-\la^{-1}n)}\right)
\end{equation}
and 
\begin{equation}
\chi(x,y):=\xi(\cla-x)\xi(-y)\xi(x+y+1/2).
\end{equation}

\begin{proposition}
The function $\phi_\la\chi$ is an analytic function on $\COMPLEXS^2$.
\end{proposition}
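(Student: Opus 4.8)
The plan is to show that $\phi_\la=W\QDILOG$ extends from the region where the Weil--Gel'fand--Zak series converges to a meromorphic function on all of $\COMPLEXS^2$ whose polar divisor is contained in the zero divisor of $\chi$; since $\xi$, and hence $\chi$, is entire (the defining product converges locally uniformly because $\sum_{m,n\ge0}e^{-2\pi(\la m+\la^{-1}n)}<\infty$), this is exactly the assertion of the proposition. The argument has a local part — an analysis on the tube where the series converges — and a global part — propagation to $\COMPLEXS^2$ by the functional equations and symmetries established above.

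First I would analyse the defining series. By \eqref{eq:wgz},
\[
\phi_\la(x,y)=e^{\pi\Imun xy}\sum_{m\in\INTEGERS}\QDILOG(x+m)\,e^{2\pi\Imun my}.
\]
Using that $\QDILOG(z)\to1$ as $\mathrm{Re}(z)\to-\infty$ and, from the inversion relation, $|\QDILOG(z)|\asymp e^{-\pi\,\mathrm{Im}(z^2)}$ as $\mathrm{Re}(z)\to+\infty$, uniformly in horizontal strips away from the poles, one checks that the two tails of the series converge locally uniformly, away from the poles of the individual summands, on the tube domain
\[
D:=\{(x,y)\in\COMPLEXS^2:\ \mathrm{Im}(x)>-\mathrm{Im}(y)>0\}.
\]
A locally uniform limit of holomorphic functions being holomorphic, $\phi_\la$ is then holomorphic on $D$ except at the poles of the terms $\QDILOG(x+m)$, which lie at $x\in\cla+\INTEGERS+\Imun(\INTEGERS_{\ge0}\la+\INTEGERS_{\ge0}\la^{-1})$; since the poles of $\QDILOG$ are purely imaginary, at each such point exactly one summand is singular, so the order of the pole of $\phi_\la$ equals that of $\QDILOG$.

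Next comes the cancellation on $D$ and the propagation. On $D$ one has $\mathrm{Im}(y)<0$ and $\mathrm{Im}(x+y)>0$, so neither $\xi(-y)$ (whose zeros need $\mathrm{Im}(y)\ge0$) nor $\xi(x+y+1/2)$ (whose zeros need $\mathrm{Im}(x+y)\le0$) vanishes on $D$, while $\xi(\cla-x)$ vanishes exactly on $\cla+\INTEGERS+\Imun(\INTEGERS_{\ge0}\la+\INTEGERS_{\ge0}\la^{-1})$, to the order equal to the number of ways the relevant element is written as $j\la+n\la^{-1}$ with $j,n\in\INTEGERS_{\ge0}$ — which is precisely the pole order of $\QDILOG$ there, both being read off a product over $\INTEGERS_{\ge0}^2$. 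Hence $\phi_\la\chi$ is holomorphic on $D$. To spread this to $\COMPLEXS^2$ I would combine: the quasi-periodicities $\phi_\la(x+1,y)=e^{-\pi\Imun y}\phi_\la(x,y)$, $\phi_\la(x,y+1)=e^{\pi\Imun x}\phi_\la(x,y)$ of the WGZ-transform, whose multipliers vanish nowhere and leave $\{\chi=0\}$ invariant; the symmetries \eqref{eq:propor}, whose multipliers are exponentials of affine forms and whose substitutions fix $\chi$ while cyclically permuting the factors $\xi(\cla-x),\xi(-y),\xi(x+y+1/2)$ (using the $1$-periodicity of $\xi$); and the functional equations \eqref{eq:func-phi}, which, solved for the shifted value in each of $x$ and $y$, allow translating $\mathrm{Im}(x)$ and $\mathrm{Im}(y)$ by $\pm\la$ and $\pm\la^{-1}$. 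For $\la^2$ irrational the group $\INTEGERS\la+\INTEGERS\la^{-1}$ is dense in $\REALS$, so the orbit of the open set $D$ under these moves covers $\COMPLEXS^2$, giving a meromorphic extension of $\phi_\la$, and of $\phi_\la\chi$, to $\COMPLEXS^2$; the remaining values of $\la$ (those with $\la^2$ rational) are reached by continuity in $\la$.

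I expect the real work to lie in showing that this extension keeps $\phi_\la\chi$ \emph{holomorphic}, not merely meromorphic. Multiplying \eqref{eq:func-phi} by the appropriate value of $\chi$ turns its right-hand side into $e^{\pi\la}\chi(x-\Imun\la,y)\phi_\la(x,y)+e^{\pi\la(1+x)-\pi\Imun\la^2}\chi(x-\Imun\la,y)\phi_\la(x,y-\Imun\la)$, and each of these two terms has a pole along the part $x\in\cla+\INTEGERS+\Imun\la^{-1}\INTEGERS_{\ge0}$ of the polar locus of $\phi_\la$ that $\xi(\cla-\cdot)$ no longer cancels after the shift. One must check that these poles cancel between the two terms: the total residue at $x_0=k+\cla+\Imun n\la^{-1}$ works out to a nonzero factor times $1+e^{2\pi\la(x_0-k)-\pi\Imun\la^2}$, which vanishes because $e^{2\pi\la(x_0-k)-\pi\Imun\la^2}=e^{\pi\Imun(1+2n)}=-1$; the analogous identity settles the $\la^{-1}$-equation. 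Equivalently, one would establish that $\chi$ obeys functional equations under $x\mapsto x\mp\Imun\la^{\pm1}$, $y\mapsto y\mp\Imun\la^{\pm1}$ that dovetail with \eqref{eq:func-phi}, so that the induced three-term relation for $\phi_\la\chi$ has entire coefficients and propagates holomorphy. Performing this cancellation uniformly at every step of the continuation — together with the routine but fussy tracking of orders at the non-generic values of $\la$ — is the substance of the proof, and is where most of the effort goes.
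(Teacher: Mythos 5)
Your overall strategy is the paper's: establish meromorphy of the WGZ series on the tube $\{\Im x>-\Im y>0\}$ with poles matched by the zeros of $\xi(\cla-x)$, use the symmetries \eqref{eq:propor} to transport this to two further tube domains on which the remaining two factors of $\chi$ account for the poles, and then propagate to all of $\COMPLEXS^2$ with the functional equations \eqref{eq:func-phi}.

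There is, however, a genuine gap in your propagation step. Equation \eqref{eq:func-phi} is a three-term relation among the values of $\phi_\la$ at $(x,y)$, $(x,y-\Imun\la^{\pm1})$ and $(x-\Imun\la^{\pm1},y)$; it determines one of these values only when the other two are already known. You therefore cannot ``translate $\Im x$ and $\Im y$ by $\pm\la$, $\pm\la^{-1}$'' independently, and the density of $\INTEGERS\la+\INTEGERS\la^{-1}$ for $\la^2$ irrational is beside the point; the fallback ``by continuity in $\la$'' for $\la^2$ rational is likewise unjustified, since it presupposes uniform control of the extension in $\la$ that you have not established. What is actually needed --- and what the paper does --- is geometric bookkeeping: write $\COMPLEXS^2\setminus(D_1\cup D_2\cup D_3)=G_1\sqcup G_2\sqcup G_3$ and check that for points in a suitable open subset of each $G_i$ one can choose a triple $\Delta_\pm(u,v)$ whose other two components lie in $D_j\cup D_k$, then iterate. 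This covers $\COMPLEXS^2$ in finitely many steps for every admissible $\la$, with no irrationality hypothesis and no limiting argument. Your density/continuity paragraph should be replaced by this.

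On the other hand, your concern about whether the extension of $\phi_\la\chi$ stays holomorphic --- i.e.\ whether the poles introduced by the shifted arguments, which $\chi$ no longer cancels after the shift, cancel between the two terms of the three-term relation --- is legitimate, and your residue computation (the factor $1+e^{2\pi\la(x_0-k)-\pi\Imun\la^2}=1+e^{\pi\Imun(1+2n)}=0$) is correct where the series representation is available. The paper simply asserts that the set of poles of the continuation remains inside $\chi^{-1}(0)$; your analysis supplies the mechanism behind that assertion, though, as you concede, it still has to be tracked through every stage of the continuation and through higher-order poles at non-generic $\la$.
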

\begin{proof}
The series $\sum_{m\in\INTEGERS}\QDILOG(x+m)e^{2\pi\Imun my}$  converges absolutely in the domain 
\begin{equation}\label{eq:d1}
D_1=\{(x,y)\in\COMPLEXS^2\vert\ \Im x>-\Im y>0\}
\end{equation}
to a meromorphic function with poles contained in
\begin{equation}
P_1:=(\cla+\INTEGERS+\Imun \la\INTEGERS_{\ge0}+\Imun \la^{-1}\INTEGERS_{\ge0})\times\COMPLEXS.
\end{equation}
Relations~\eqref{eq:propor} imply that the domain of meromorphicity of $\phi_\la(x,y)$ is extended to the union $\cup_{i=1}^3D_i$ with $D_1$ defined in \eqref{eq:d1},
\begin{equation}
D_2=\{(x,y)\in\COMPLEXS^2\vert\ \Im\cla>\Im x>-\Im y\}
\end{equation}
with poles contained in
\begin{equation}
P_2:=\COMPLEXS\times(\INTEGERS+\Imun \la\INTEGERS_{\ge0}+\Imun \la^{-1}\INTEGERS_{\ge0}),
\end{equation}
and 
\begin{equation}
D_3=\{(x,y)\in\COMPLEXS^2\vert\ \Im x<\Im\cla,\ \Im y<0\}
\end{equation}
with poles contained in
\begin{equation}
P_3:=\{(x,y)\in\COMPLEXS^2\vert\ x+y\in \frac12+\INTEGERS+\Imun \la\INTEGERS_{\le0}+\Imun \la^{-1}\INTEGERS_{\le0}\},
\end{equation}
Notice that $\cup_{i=1}^3P_i=\chi^{-1}(0)$. It remains to extend the domain of definition of $\phi_\la(x,y)$ to the rest of the space $\COMPLEXS^2$.
We have
\begin{equation}
\COMPLEXS^2\setminus \cup_{i=1}^3D_i=\sqcup_{i=1}^3G_i,
\end{equation}
where
\begin{equation}
G_1=\{(x,y)\in\COMPLEXS^2\vert\ \Im x\le-\Im y\le0\},
\end{equation}
\begin{equation}
G_2=\{(x,y)\in\COMPLEXS^2\vert\ \Im\cla\le\Im x\le-\Im y\},
\end{equation}
\begin{equation}
G_3=\{(x,y)\in\COMPLEXS^2\vert\ \Im x\ge\Im\cla,\ \Im y\ge0\}.
\end{equation}
It is easy to see that the triples of points 
\begin{equation}
\Delta_\pm(x,y):=((x,y),(x-\Imun\la^{\pm1},y),(x-\Imun\la^{\pm1},y-\Imun\la^{\pm1}))
\end{equation}
are such that for each $i\in\{1,2,3\}$, there exists an open set $U_i\subset G_i$, such that for any $(x,y)\in U_i$ there exists a triple, either  $\Delta_+(u,v)$ or $\Delta_-(u,v)$, such that one of its components is $(x,y)$ and the other two components are in the union $D_j\cup D_k$ with $\{i,j,k \}=\{1,2,3\}$. That means that we can use one of the functional equations~\eqref{eq:func-phi} to determine the value of $\phi_\la(x,y)$. Thus, we extend the domain of definition of $\phi_\la(x,y)$ to $\cup_{i=1}^3 (D_i\cup G_i)$ with the set of poles still contained in the set $\chi^{-1}(0)$. In this way, one can continue enlarging the domain until 
we cover the whole space $\COMPLEXS^2$.
\end{proof}
\subsection{Shape gauge invariance}
The gauge transformation in the space of dihedral angles and the level induced by an edge $e$, see \cite{AndersenKashaev2011}, is equivalent to an imaginary shift of the integration variable $x(e)$ (this is seen from the formula \eqref{eq:psi}) which, by using  the holomorphicity of the Boltzmann weights, can be compensated by an imaginary shift of the integration path in the complex $x(e)$-plane. 

This completes the proof of Theorem \ref{main} except for the needed tetrahedral symmetries, which we treat in the following section.

\section{TQFT rules and tetrahedral symmetries}\label{tqft-rules}
We consider oriented surfaces with a restricted (oriented) cellular structure: all 2-cells are either bigons or triangles with their natural cellular structures. For example, the unit disk $D$ in $\mathbb{C}$ has four different bigon structures with two 0-cells $\{e^0_\pm(*)=\pm1\}$, two 1-cells $e^1_\pm\colon [0,1]\to D$ given by
\[
e^1_+(t)=e^{i\pi t}\ \mathrm{or}\ e^1_+(t)=-e^{-i\pi t},\quad
e^1_-(t)=e^{-i\pi t}\ \mathrm{or}\ e^1_-(t)=-e^{i\pi t}.
\]
An \emph{inessential bigon} is one
where both edges are parallel. For the unit disk the two cell structures with $\{e^1_+(t)=e^{i\pi t},e^1_-(t)=e^{-i\pi t}\}$ and $\{e^1_+(t)=-e^{-i\pi t},e^1_-(t)=-e^{i\pi t}\}$ are inessential. Inessential bigons can be eliminated by naturally contracting them to a 1-cell, which in the case of the unit disk corresponds to projection to the real axis, $z\mapsto \Re z$.

For triangles we forbid cyclic orientation of the 1-cells. For example, the unit disk admits eight different triangle structures with 0-cells at the third roots of unity $\{e^0_0(*)=1, e^0_\pm(*)=e^{\pm 2\pi i/3}\}$, but only six of them are admissible.

Let $S$ be an oriented triangulated surface. We define a line bundle $E_S$ over 
$\mathbb{T}^{\Delta_1(S)}$ as the identification space of $\REALS^{\Delta_1(S)}\times\COMPLEXS$ with respect to the equivalence relation generated by $(f+\delta_e,z)\sim(f,e^{\pi\Imun\sum_{e'\in \Delta_1(S)}a(e,e')f(e')}z)$, where $\delta_e(e')=1$ if $e=e'$ and $0$ otherwise, and $a(e,e')=\sum_{t\in\Delta_2(S)}a_t(e,e')$ with $a_t(e,e')$ the antisymmetric in $e$ and $e'$ function taking the value $1$ if $e$ and $e'$ belong to $t$ and $e'$ is next to $e$ in the clockwise order (w.r.t. the orientation of $S$) and the value $0$ if $e$ and $e'$ do not belong to $t$. The projection map $p_S\colon E_S\to \mathcal{T}^{\Delta_1(S)}$ is given by
\begin{equation}
p_S([f,z])=e^{2\pi\Imun f}
\end{equation}

We start defining our TQFT by associating to an oriented triangulated surface $S$ the vector space $\Gamma(E_S)$ of smooth sections of $E_S$.
 Let $T$ be a shaped tetrahedron in $\REALS^3$ with ordered vertices $v_i$, enumerated by the integers $0,1,2,3$.
In the case of a positive tetrahedron, we define a vector $\Psi_T\in \Gamma(E_{\partial T})$ by the following formula
\begin{equation}
\Psi_T(e^{2\pi\Imun x})=g_{a,c}(x_{02}+x_{13}-x_{03}-x_{12}, x_{02}+x_{13}-x_{01}-x_{23})
\end{equation}
where $x\in\REALS^{\Delta_1(T)}$ with $x_{ij}:=x(v_iv_j)$, $2\pi a=\alpha_T(v_0v_1)$, and 
$2\pi c=\alpha_T(v_0v_3)$. In the case of a negative tetrahedron, we use the same formula with $g_{a,c}$ replaced by $\bar g_{a,c}$. The gluing rule is very simple: take the product of tetrahedral weights and integrate over state variables associated with all internal edges.

Next, we consider cones over essential bigons with induced cellular structure. Overall, the condition of admissibility of triangular faces leaves four isotopy classes of such cones. Let us describe them by using the cone in $\mathbb{R}^3\simeq\mathbb{C}\times\mathbb{R}$ over the unit disk in $\mathbb{C}$ with the apex at $(0,1)\in\mathbb{C}\times\mathbb{R}$. The four possible cellular structures are identified by three 0-cells $\{e^0_\pm(*)=(\pm1,0), e^0_0(*)=(0,1)\}$, and four 1-cells
\[
\{e^1_{0\pm}(t)=(\pm e^{i\pi t},0),\ e^1_{1\pm}(t)=(\pm(1-t),t)\}
\]
or
\[
\{e^1_{0\pm}(t)=(\mp e^{-i\pi t},0),\ e^1_{1\pm}(t)=(\pm(1-t),t)\}
\]
or
\[
\{e^1_{0\pm}(t)=(\pm e^{i\pi t},0),\ e^1_{1\pm}(t)=(\pm t,1-t)\}
\]
or else
\[
\{e^1_{0\pm}(t)=(\mp e^{-i\pi t},0),\ e^1_{1\pm}(t)=(\pm t,1-t)\}.
\]
Let us call them cones of type $A_+$, $A_-$, $B_+$, and $B_-$, respectively.
We add TQFT rules by associating to these cones the following vectors:
\begin{equation}
\Psi_{Y_\pm}(x)=e^{\pm\epsilon_Y\frac{\pi\Imun}{12}}e^{\pi\Imun(x_{0+}-x_{0-})(x_{1+}-x_{1-})}\sqrt{\delta_1(x_{0+}-x_{0-}+1/2)},\end{equation}
where $\quad Y\in\{A,B\}$, $\epsilon_A=-1$, $\epsilon_B=1$, $x_{i\pm}:=x(e^1_{i\pm})$, $i\in\{0,1\}$.
Here we use  a formal square root of the 1-periodic $\delta$-function. In practice, this is not a problem, since all bigons should be identified pairwise so that all square roots of $\delta$-functions will be squared.
Notice that our cones are symmetric with respect to rotation by the angle $\pi$ around the vertical coordinate axis, and this symmetry corresponds to the symmetry with respect to exchange of the arguments $x_{i+}\leftrightarrow x_{i-}$.

Now we give a TQFT description of the tetrahedral symmetries of the tetrahedral partition functions generated by the identities of Lemma~\ref{lemma2}. Let us take a positive tetrahedron with vertices $v_0,\ldots,v_3$. The edge $v_0v_1$ is incident to two faces opposite to $v_2$ and $v_3$. We can glue two cones, one of type $A_+$ and another one of type $A_-$, to these two faces in the way that one of the edges of the base bigons are glued to the initial edge $v_0v_1$, of course, by respecting all the orientations. Namely, we glue a cone of type $A_+$ to the face opposite to vertex $v_3$ so that the apex of the cone is glued to vertex $v_2$, and we glue a cone of type $A_-$ to the face opposite to vertex $v_2$ so that the apex of the cone is glued to vertex $v_3$. Finally, we can glue naturally the two bigons with each other by pushing continuously the initial edge inside the ball and eventually closing the gap like a book. The result of all these operations is that we obtain a negative tetrahedron, where the only difference with respect to the initial tetrahedron is that the orientation of the initial edge $v_0v_1$ has changed and this corresponds to changing the order of these vertices. Notice that as these vertices are neighbors, their exchange does not affect the orientation of all other edges. Identity~\eqref{eq:fund1} corresponds exactly to these geometric operations  on the level of the tetrahedral partition functions.

 Similarly, we can describe the transformations with respect to change of orientations of the edges $v_1v_2$ and $v_2v_3$ and relate them to the identities~\eqref{eq:fund2} and \eqref{eq:fund1}, respectively. Altogether, these three transformations correspond to canonical generators of the permutation group of four elements which is the complete symmetry group of a tetrahedron.

\section{Appendix A. Faddeev's quantum dilogarithm}
Following notation of \cite{AndersenKashaev2011}, Faddeev's quantum dilogarithm $\QDILOG(z)$ is defined by the integral
   \begin{equation}\label{eq:ncqdl}
\QDILOG(z)\equiv\exp\left(
\int_{\Imun 0-\infty}^{\Imun 0+\infty}
\frac{e^{-\Imun 2 zw}\, dw}{4\sinh(w\la)
\sinh(w/\la) w}\right)
    \end{equation}
in the strip $|\Im z|<|\Im\cla|$, where
\(
\cla\equiv\Imun(\la+\la^{-1})/2.
\)
Define also
\begin{equation}\label{eq:cons}
\zeta_{inv}\equiv e^{\Imun\pi(1+2\cla^2)/6}=e^{\Imun\pi\cla^2}
\zeta_o^2,\quad
\zeta_o\equiv e^{\Imun\pi(1-4\cla^2)/12}
\end{equation}
When  $\Im\la^2>0$, the integral can be calculated explicitly
\begin{equation}\label{eq:ratio}
\QDILOG(z)=
(e^{2\pi (z+\cla)\la};q^2)_\infty/
(e^{2\pi (z-\cla)\la^{-1}};\bar q^2)_\infty
\end{equation}
where
\[
q\equiv e^{\Imun\pi\la^2},\quad\bar q\equiv e^{-\Imun\pi\la^{-2}}.
\]
Using symmetry properties
\[
\QDILOG(z)=\mathop{\Phi_{-\la}}(z)=\mathop{\Phi_{1/\la}}(z)
\]
we choose
\[
\Re\la>0,\quad \Im \la\ge0.
\]
$\QDILOG(z)$ can be continued analytically in variable
$z$ to the entire complex plane as a meromorphic function with essential singularity at infinity and with the following characteristic properties
\begin{description}
\item[zeros and poles]
  \begin{equation}
    \label{eq:polzer}
(\QDILOG(z))^{\pm1}=0\ \Leftrightarrow \ z=
\mp(\cla+m\Imun\la+n\Imun\la^{-1}),\ m,n\in\INTEGERS_{\ge0};
  \end{equation}
\item[behavior at infinity] depending on the direction along which the limit is taken
\begin{equation}\label{eq:asymp}
\QDILOG(z)\bigg\vert_{|z|\to\infty} \approx\left\{
\begin{array}{ll}
1&|\arg z|>\frac{\pi}{2}+\arg \la\\
\zeta_{inv}^{-1}e^{\Imun\pi z^2}&|\arg z|<\frac{\pi}{2}-\arg\la\\
\frac{(\bar q^2;\bar q^2)_\infty}{\Theta(\Imun\la^{-1}z;-\la^{-2})}&
|\arg z-\frac\pi2|<\arg\la\\
\frac{\Theta(\Imun\la z;\la^{2})}{(q^2; q^2)_\infty}&
|\arg z+\frac\pi2|<\arg\la
\end{array}\right.
\end{equation}
where
\[
\Theta(z;\tau)\equiv\sum_{n\in\INTEGERS}
e^{\Imun\pi\tau n^2+\Imun2\pi zn}, \quad\Im\tau>0;
\]
\item[inversion relation]
\begin{equation}\label{eq:inversion}
\QDILOG(z)\QDILOG(-z)=\zeta_{inv}^{-1}e^{\Imun\pi z^2};
\end{equation}
\item[functional equations]
\begin{equation}\label{eq:shift}
\QDILOG(z-\Imun\la^{\pm1}/2)=(1+e^{2\pi\la^{\pm1}z })
\QDILOG(z+\Imun\la^{\pm1}/2);
\end{equation}
\item[unitarity] when  $\la$ is real or on the unit circle
\begin{equation}\label{eq:qdlunitarity}
(1-|\la|)\Im\la=0\ \Rightarrow\
\overline{\QDILOG(z)}=1/\QDILOG(\bar z);
\end{equation}
\item[quantum pentagon identity]
\begin{equation}\label{eq:pent}
\QDILOG(\MOM)\QDILOG(\POS)=\QDILOG(\POS)\QDILOG(\MOM+\POS)
\QDILOG(\MOM),
\end{equation}
where selfadjoint operators $\MOM$ and $\POS$ in $L^2(\REALS)$
satisfy the commutation relation $\MOM\POS-\POS\MOM=(2\pi\Imun)^{-1}$.
\end{description}

\subsection{Integral analog of $_1\psi_1$-summation formula of Ramanujan}
\label{sec:eioa-aiae-_1ps}

Following \cite{MR1828812}, consider the following Fourier integral
\begin{equation}\label{ramanint}
\Psi(u,v,w)\equiv
\int_{\REALS}\frac{\QDILOG(x+u)}{\QDILOG(x+v)}e^{2\pi\Imun wx}\, dx
\end{equation}
where
\begin{equation}\label{restrictions1}
\Im(v+\cla)>0,\quad\Im(-u+\cla)>0, \quad \Im(v-u)<\Im w<0
\end{equation}
Conditions~\eqref{restrictions1} can be relaxed by deforming the integration contour in the complex plane of the variable $x$, keeping asymptotic directions within the sectors
$\pm(|\arg x|-\pi/2)>\arg\la$. Enlarged in this way the domain of the  variables $u,v,w$ has the form
\begin{equation}\label{restrictions2}
|\arg (\Imun z)|<\pi-\arg\la,\quad z\in\{w,v-u-w,u-v-2\cla\}
\end{equation}
Considering $\QDILOG(z)$ as a non-compact analog the function
$(x;q)_\infty$, definition~\eqref{ramanint} can be interpreted as an integral analog of $_1\psi_1$-summation formula of Ramanujan.

By the method of residues, \eqref{ramanint} can be calculated explicitly
\begin{gather}\label{ramanres1}
  \Psi(u,v,w)=
  \zeta_o\frac{\QDILOG(u-v-\cla)\QDILOG(w+\cla)}{\QDILOG(u-v+w-\cla)}
  e^{-2\pi\Imun w(v+\cla)} \\\label{ramanres2}
  =\zeta_o^{-1}\frac{\QDILOG(v-u-w+\cla)}{\QDILOG(v-u+\cla)
\QDILOG(-w-\cla)}
  e^{-2\pi\Imun w(u-\cla)},
\end{gather}
where the two expressions in the right hand side are related by the inversion relation~\eqref{eq:inversion}.
\subsection{Fourier transformation formulae}
\label{sec:oidi-idai-oodua}

Special cases of $\Psi(u,v,w)$ lead to the following Fourier transformation formulae  for Faddeev's quantum dilogarithm
\begin{multline}\label{fourier1}
  \lefteqn{\FQDILOG_+(w)\equiv \int_{\REALS}\QDILOG(x)e^{2\pi \Imun
      wx}\,dx
    =\Psi(0,v,w)\vert_{v\to-\infty}}\\
  =\zeta_o^{-1}e^{2\pi \Imun w\cla}/
  \QDILOG(-w-\cla)=\zeta_oe^{-\Imun\pi w^2}
  \QDILOG(w+\cla)
\end{multline}
and
\begin{multline}\label{fourier2}
  \lefteqn{\FQDILOG_-(w)\equiv \int_{\REALS}(\QDILOG(x))^{-1}e^{2\pi
      \Imun wx}\,dx
    =\Psi(u,0,w)\vert_{u\to-\infty}}\\
  =\zeta_oe^{-2\pi\Imun w\cla} \QDILOG(w+\cla)=
  \zeta_o^{-1}e^{\Imun\pi w^2}/\QDILOG(-w-\cla).
\end{multline}
The corresponding inverse transformations look as follows
\begin{equation}\label{finv}
(\QDILOG(x))^{\pm1}=\int_{\REALS}\FQDILOG_\pm(y)e^{-2\pi\Imun xy}dy,
\end{equation}
where the pole $y=0$ should be surrounded from below.

\subsection{Other integral identities}
\label{sec:eioa-oiaa}

Faddeev's quantum dilogarithm satisfies other analogs of hypergeometric identities, see for example \cite{MR2128719}. Following \cite{MR2171695,MR2952777}, for any $n\ge 1$, we define
\begin{equation}\label{eq:ihg}
\IHG{n}(a_1,\ldots,a_n;b_1,\ldots,b_{n-1};w)
\equiv\int_\REALS dx\,
e^{\Imun2\pi x(w-\cla)}\prod_{j=1}^n
\frac{\QDILOG(x+a_j)}{\QDILOG(x+b_j-\cla)}
\end{equation}
where $b_n=\Imun0$,
\[
\Im(b_j)>0,\quad \Im(\cla-a_j)>0,\quad \sum_{j=1}^n\Im(b_j-a_j-\cla)<
\Im(w-\cla)<0.
\]
The integral analog of $_1\psi_1$-summation formula of Ramanujan in this notation takes the form
\begin{equation}\label{eq:raman}
\Psi(u,v,w)=e^{-\Imun2\pi w(v+\cla)}\IHG{1}(u-v-\cla;w+\cla),\quad
\IHG{1}(a;w)=\zeta_o
\frac{\QDILOG(a)
\QDILOG(w)}{\QDILOG(a+w-\cla)}.
\end{equation}
Equivalently, we can write
\begin{multline}\label{eq:ramanbar}
\bar\Psi_1(a;w)\equiv\int_\REALS\frac{\QDILOG(x+\cla-\Imun0)}
{\QDILOG(x+a)}
e^{-\Imun2\pi x(w+\cla)}dx\\
=e^{\Imun2\pi(a+\cla)(w+\cla)}\IHG{1}(-a;-w)
=\zeta_o^{-1}
\frac{\QDILOG(a+w+\cla)}{\QDILOG(a)
\QDILOG(w)}.
\end{multline}
Using the integral $_1\psi_1$-summation formula of Ramanujan, we can obtain the integral analog of the transformation of Heine for  $_1\phi_2$
\[
\IHG{2}(a,b;c;w)=\frac{\QDILOG(a)}{\QDILOG(c-b)}\,
\IHG{2}(c-b,w;a+w;b).
\]
Using here the evident symmetry
\[
\IHG{2}(a,b;c;w)=\IHG{2}(b,a;c;w)
\]
we obtain the integral analog of Euler--Heine transformation
\begin{multline}\label{eq:eu-he}
\IHG{2}(a,b;c;w)\\
=\frac{\QDILOG(a)\QDILOG(b)\QDILOG(w)}
{\QDILOG(c-b)\QDILOG(c-a)\QDILOG(a+b+w-c)}\,
\IHG{2}(c-a,c-b;c;a+b+w-c).
\end{multline}
Performing the Fourier transformation on $w$ and using equation~\eqref{eq:raman}, we obtain the integral version of Saalsch\"utz summation formula:
\begin{multline}\label{eq:saal}
\IHG{3}(a,b,c;d,a+b+c-d-\cla;-\cla)
=\zeta_o^3
e^{\Imun\pi d(2\cla-d)}\\
\times\frac{\QDILOG(a)\QDILOG(b)\QDILOG(c)
\QDILOG(a-d)\QDILOG(b-d)\QDILOG(c-d)}{\QDILOG(a+b-d-\cla)\QDILOG(b+c-d-\cla)
\QDILOG(c+a-d-\cla)}.
\end{multline}
One particular case is obtained by taking the limit $c\to-\infty$:
\begin{equation}\label{eq:saal1}
\IHG{2}(a,b;d;-\cla)=\zeta_o^3
e^{\Imun\pi d(2\cla-d)}\frac{\QDILOG(a)\QDILOG(b)
\QDILOG(a-d)\QDILOG(b-d)}{\QDILOG(a+b-d-\cla)}.
\end{equation}

\subsection{Quasi-classical limit of Faddeev's quantum dilogarithm}
\label{sec:eaac-idaa}

We consider the asymptotic limit $\la\to0$.
\begin{proposition}
For $\la\to0$ and fixed $x$, one has the following asymptotic expansion
\begin{equation}
  \label{eq:as-ex}
 \ln\QDILOG\left(\frac x{2\pi\la}\right)=\sum_{n=0}^\infty
\left(2\pi\Imun\la^2\right)^{2n-1}\frac{B_{2n}(1/2)}{(2n)!}
\frac{\partial^{2n}\dil(-e^x)}{\partial x^{2n}}
\end{equation}
where $B_{2n}(1/2)$ --- Bernoulli polynomials evaluated at $1/2$.
\end{proposition}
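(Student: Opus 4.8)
The plan is to work directly from the integral representation
$\ln\QDILOG(z)=\int_{C}e^{-2\Imun zw}\big(4\sinh(w\la)\sinh(w/\la)w\big)^{-1}dw$
and to read off the expansion term by term from the generating function of the Bernoulli polynomials evaluated at $1/2$. First I would substitute $z=x/(2\pi\la)$ and rescale the contour variable by $w=\pi\la t$; taking $\la$ real and positive (so that $\la\to0$ through positive values, consistently with $\hbar^{-1}=(\la+\la^{-1})^{2}>0$), this carries $C$ into another copy of the real axis deviating above the origin, and one obtains
\[
\ln\QDILOG\!\left(\frac{x}{2\pi\la}\right)=\int_{C}\frac{e^{-\Imun x t}\,dt}{4\,t\,\sinh(\pi\la^{2}t)\sinh(\pi t)},
\]
so that the entire $\la$-dependence now sits in the single factor $\sinh(\pi\la^{2}t)$.

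Next I would insert the expansion $\dfrac{\pi\la^{2}t}{\sinh(\pi\la^{2}t)}=\sum_{n\ge0}\dfrac{B_{2n}(1/2)}{(2n)!}(2\pi\la^{2}t)^{2n}$, which follows from the classical identity $\dfrac{u/2}{\sinh(u/2)}=\sum_{n\ge0}\dfrac{B_{2n}(1/2)}{(2n)!}u^{2n}$ together with the vanishing of $B_{k}(1/2)$ for odd $k$. Formally interchanging sum and integral yields the series
\[
\sum_{n\ge0}\frac{B_{2n}(1/2)}{(2n)!}\,\frac{(2\pi)^{2n}\la^{4n-2}}{4\pi}\,I_{n}(x),\qquad I_{n}(x):=\int_{C}\frac{e^{-\Imun x t}\,t^{2n-2}\,dt}{\sinh(\pi t)}.
\]
Since $\partial_{x}^{2n}e^{-\Imun xt}=(-1)^{n}t^{2n}e^{-\Imun xt}$, one has $I_{n}=(-1)^{n}\partial_{x}^{2n}I_{0}$, so only $I_{0}(x)=\int_{C}e^{-\Imun xt}\big(t^{2}\sinh(\pi t)\big)^{-1}dt$ needs to be evaluated. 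For $x<0$ I would close $C$ in the upper half-plane, pick up the residues at $t=\Imun k$, $k\ge1$ (the origin lies below $C$ and contributes nothing), and get $I_{0}(x)=-2\Imun\sum_{k\ge1}(-e^{x})^{k}/k^{2}=-2\Imun\,\dil(-e^{x})$, extended to the relevant range of $x$ by analyticity. Finally, bookkeeping the constants, using $\Imun^{2n-1}=-(-1)^{n}\Imun$, identifies the general term with $(2\pi\Imun\la^{2})^{2n-1}\,\frac{B_{2n}(1/2)}{(2n)!}\,\partial_{x}^{2n}\dil(-e^{x})$, which is the asserted expansion.

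The main obstacle is that the interchange of sum and integral is not literally valid: the Bernoulli series for $u/(2\sinh(u/2))$ has radius of convergence $2\pi$, so for each fixed $\la$ it diverges once $|t|$ is large, and the statement is an asymptotic rather than a convergent expansion. To make it rigorous I would truncate, writing $\dfrac{u/2}{\sinh(u/2)}=\sum_{n=0}^{N}\dfrac{B_{2n}(1/2)}{(2n)!}u^{2n}+R_{N}(u)$ with $|R_{N}(u)|\le C_{N}|u|^{2N+2}$ for bounded $u$ and $R_{N}$ of at most polynomial growth for large real $u$; since $1/\sinh(\pi t)$ decays exponentially along $C$, the remainder integral is $O(\la^{4N+4})$, one order beyond the last retained term, which is exactly what an asymptotic expansion to order $N$ demands. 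The only further point needing care is the double pole of the integrand of $I_{0}$ at the origin, which is avoided precisely because $C$ deviates into the upper half-plane; the two a priori affine integration constants in $I_{0}$ are forced to zero by comparison with $\dil(-e^{x})\to0$ and its $x$-derivative as $x\to-\infty$ — in fact the residue computation above already produces $I_{0}$ with no affine part.
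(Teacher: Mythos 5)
Your argument is correct but proceeds along a genuinely different route from the paper's. The paper derives the expansion formally from the functional equation \eqref{eq:shift}: it writes the difference $\ln\QDILOG\left(\frac{x-\Imun\pi\la^2}{2\pi\la}\right)-\ln\QDILOG\left(\frac{x+\Imun\pi\la^2}{2\pi\la}\right)$ as $-2\sinh(\Imun\pi\la^2\partial/\partial x)$ applied to $\ln\QDILOG\left(\frac{x}{2\pi\la}\right)$, uses $\ln(1+e^x)=-\partial_x\dil(-e^x)$ to invert the operator and obtain \eqref{eq:as-ex-i}, and then expands $z/\sinh z$ by the Bernoulli generating function; the manipulation is explicitly labelled as formal. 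You instead start from the defining contour integral, rescale $w=\pi\la t$ so that the entire $\la$-dependence is isolated in the single factor $\sinh(\pi\la^2 t)$, expand that factor by the same Bernoulli identity, and evaluate the base integral $I_0$ by residues, recovering $\dil(-e^x)$ directly; your constant bookkeeping checks out. What your route buys is rigor: truncating the Bernoulli series and bounding the remainder against the exponential decay of $1/\sinh(\pi t)$ actually establishes the asymptotic character of the expansion, which the paper's operator calculus does not address. Two small corrections: the singularity of the integrand of $I_0$ at the origin is a pole of order three (since $t^2\sinh(\pi t)\sim\pi t^3$), not two --- immaterial, as $C$ avoids it; and the remainder integral is $O(\la^{4N+2})$ rather than $O(\la^{4N+4})$, because the prefactor $1/(\pi\la^2 t)$ costs a factor $\la^{-2}$ --- this is still exactly the order of the first omitted term, so the asymptotic claim stands.
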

\begin{proof}
From one hand side, we can write formally
\[
\ln\QDILOG\left(\frac {x-\Imun\pi\la^2}{2\pi\la}\right)-
\ln\QDILOG\left(\frac {x+\Imun\pi\la^2}{2\pi\la}\right)=
-2\sinh(\Imun\pi\la^2\partial/\partial x)
\ln\QDILOG\left(\frac x{2\pi\la}\right).
\]
On the other hand side,
\[
\ln(1+e^x)=\frac{\partial}{\partial x}\int_{-\infty}^x\ln(1+e^z)dz=
-\frac{\partial}{\partial x}\dil(-e^x).
\]
Substituting these expressions into the linearized functional equation~(\ref{eq:shift})
with positive exponent of $\la$, we obtain
 \begin{equation}
   \label{eq:as-ex-i}
  2\pi\Imun\la^2 \ln\QDILOG\left(\frac x{2\pi\la}\right)=
\frac{\Imun\pi\la^2\partial/\partial x}{
\sinh(\Imun\pi\la^2\partial/\partial x)}\dil(-e^x).
 \end{equation}
Using the expansions
\[
\frac z {\sinh(z)}=\sum_{n=0}^\infty B_{2n}(1/2)\frac{(2z)^{2n}}{(2n)!}
\]
we obtain (\ref{eq:as-ex}).
\end{proof}
\begin{corollary}
For $\la\to0$, one has
\begin{equation}
  \label{eq:q-cl}
\QDILOG\left(\frac x{2\pi\la}\right)
=e^{\frac1{2\pi\Imun\la^2}\dil(-e^x)}\left(1+\mathcal{O}(\la^2)\right).
\end{equation}
\end{corollary}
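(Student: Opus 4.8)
The plan is to extract the expansion directly from the functional equation \eqref{eq:shift}, converting the finite-difference relation it encodes into a differential-operator identity in which the classical dilogarithm appears, and then inverting the resulting $\sinh$ operator as a formal power series in $\la^2$.

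First I would rescale the variable in \eqref{eq:shift} with the positive power of $\la$, setting $z=\frac{x}{2\pi\la}$ so that the half-period shifts $z\mp\Imun\la/2$ become $\frac{x\mp\Imun\pi\la^2}{2\pi\la}$ while the factor $1+e^{2\pi\la z}$ collapses to $1+e^x$. Taking logarithms and abbreviating $f(x):=\ln\QDILOG\!\left(\frac{x}{2\pi\la}\right)$, the relation reads
\[
f(x-\Imun\pi\la^2)-f(x+\Imun\pi\la^2)=\ln(1+e^x).
\]
The left-hand side is $-2\sinh(\Imun\pi\la^2\,\partial_x)f(x)$, where the shift operator $e^{a\partial_x}$ acts by $f(x)\mapsto f(x+a)$.

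Second, I would recognize the right-hand side as a logarithmic derivative of the classical dilogarithm, namely $\ln(1+e^x)=-\partial_x\dil(-e^x)$, so that the difference equation becomes $2\sinh(\Imun\pi\la^2\,\partial_x)f(x)=\partial_x\dil(-e^x)$. Multiplying by $\Imun\pi\la^2$ and dividing by $\sinh(\Imun\pi\la^2\,\partial_x)$ then gives
\[
2\pi\Imun\la^2\,f(x)=\frac{\Imun\pi\la^2\,\partial_x}{\sinh(\Imun\pi\la^2\,\partial_x)}\,\dil(-e^x).
\]
Expanding $\frac{z}{\sinh z}=\sum_{n\ge0}B_{2n}(1/2)\frac{(2z)^{2n}}{(2n)!}$ with $z=\Imun\pi\la^2\,\partial_x$, collecting the powers of $2\pi\Imun\la^2$, and dividing through by $2\pi\Imun\la^2$ reproduces exactly \eqref{eq:as-ex}, the odd Bernoulli polynomials $B_{2n+1}(1/2)$ vanishing as required.

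The main obstacle is the legitimacy of the operator inversion. The operator $\sinh(\Imun\pi\la^2\,\partial_x)$ has a nontrivial kernel, consisting of functions invariant under $x\mapsto x+2\Imun\pi\la^2$, so the step $f=\frac{1}{2\sinh}(\cdots)$ determines $f$ only up to a shift-periodic term and is meaningful solely as a formal power series in $\la^2$. The content of the statement is precisely that this formal solution coincides with the genuine asymptotic expansion for fixed $x$ as $\la\to0$; to confirm that the correcting shift-periodic term contributes nothing to the asymptotics, one must invoke the known behavior of $\QDILOG$ itself---its integral representation and the sector-dependent asymptotics \eqref{eq:asymp}---which single out the particular solution produced by the formal inversion. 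This is why the result is an asymptotic, rather than a convergent, identity.
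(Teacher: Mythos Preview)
Your argument is essentially identical to the paper's: the Corollary is stated there without separate proof, as it is the $n=0$ term of the asymptotic expansion in the preceding Proposition, and your derivation of that expansion---rescale in \eqref{eq:shift}, rewrite the finite difference as $-2\sinh(\Imun\pi\la^2\partial_x)$, identify $\ln(1+e^x)=-\partial_x\dil(-e^x)$, invert and expand $z/\sinh z$ via Bernoulli polynomials---matches the paper's proof of the Proposition line for line. Your final paragraph on the kernel of the $\sinh$ operator and the asymptotic (rather than exact) nature of the inversion is a welcome elaboration on a point the paper handles only with the word ``formally.''
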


\bibliographystyle{plain}
\bibliography{biblio}

\end{document}